\documentclass[a4paper, 12pt]{amsart}

\usepackage[T1]{fontenc}
\usepackage[utf8]{inputenc}

\usepackage{amsmath, amssymb, amsthm}
\usepackage{chngcntr, cite, color}
\usepackage{dsfont}
\usepackage{enumitem, etoolbox}
\usepackage{hyperref}
\usepackage{indentfirst}

\hypersetup{colorlinks = true, linkcolor = black, filecolor = black, urlcolor = black, citecolor = black}

\makeatletter

    \renewcommand{\l@section}{\@tocline{1}{0pt}{1.5pc}{5pc}{}}
    \renewcommand{\l@subsection}{\@tocline{2}{0pt}{3.4pc}{0pc}{}}

    \newtheoremstyle{italicremark}
    {}{}
    {}
    {}
    {\itshape}
    {.}
    { }
    {\thmname{#1}\thmnumber{ #2}\thmnote{ (#3)}}
\makeatother

\newcounter{proofstep}
\AtBeginEnvironment{proof}{\setcounter{proofstep}0}

\newcounter{proofcase}
\AtBeginEnvironment{proof}{\setcounter{proofcase}0}

\newtheorem{theorem}{Theorem}[section]

\newtheorem{corollary}[theorem]{Corollary}
\newtheorem{lemma}[theorem]{Lemma}

\theoremstyle{definition}

\theoremstyle{italicremark}
\newtheorem{remark}[theorem]{\textit{Remark}}

\newcommand{\Rbb}{\mathbb{R}}

\newcommand{\revnew}[1]{{\color{red}\ifmmode\else\hypersetup{citecolor=red}\fi#1}}

\numberwithin{equation}{section}

\title[Conformally flat free boundary minimal hypersurfaces]{Conformally flat free boundary minimal hypersurfaces in the ball}
\author[Roney Santos]{Roney Santos}

\address{Roney Santos \newline
\indent \normalfont Instituto de Matem\'atica e Estat\'istica \newline
\indent Universidade de São Paulo \newline
\indent Rua do Mat\~ao, 05508-900, S\~ao Paulo-SP, Brasil \newline
\indent \texttt{roneypsantos@ime.usp.br}}
	
\keywords{Minimal surfaces, free boundary problems, conformal geometry}

\begin{document}

\begin{abstract}
    In this note, we show that a locally conformally flat free boundary minimal hypersurface in $B^n$ must be either an equatorial ball or a critical catenoid for all $n \geq 4$. As a consequence, we conclude that the equatorial ball is unique among locally conformally flat free boundary minimal hypersurfaces in $B^n$ with the topology of $B^{n - 1}$.
\end{abstract}

\maketitle

\section{Introduction}

A properly immersed minimal submanifold in the unit Euclidean ball $B^n$ that is smooth and connected, and whose boundary is smooth and meets $\partial B^n$ orthogonally is commonly referred to as \emph{free boundary minimal submanifold} in $B^n$. The simplest example of such submanifolds is the \emph{equatorial ball}, namely a piece of vector subspace inside the ball. In codimension one, a piece of minimal non-planar rotational hypersurface intersecting $B^n$ and meeting $\partial B^n$ orthogonally is a \emph{critical catenoid}.\smallskip

Our goal is to show that these are the only examples in codimension one that are conformally flat.

\begin{theorem}\label{thm:main}
    Let $\Sigma$ be a locally conformally flat free boundary minimal hypersurface in $B^n$ with $n \geq 4$. Then $\Sigma$ is congruent to either the equatorial ball or the critical catenoid.
\end{theorem}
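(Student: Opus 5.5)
The argument rests on the classical description of conformally flat hypersurfaces in Euclidean space (Cartan--Schouten, and do Carmo--Dajczer--Mercuri in the minimal case). For $n-1 \geq 3$, that is $n \geq 4$, a hypersurface $\Sigma^{n-1} \subset \Rbb^n$ is locally conformally flat if and only if at each point its second fundamental form has a principal curvature of multiplicity at least $n-2$. Write the principal curvatures as $\lambda$ (multiplicity $\geq n-2$) and $\mu$. Minimality gives $(n-2)\lambda + \mu = 0$, so $\mu = -(n-2)\lambda$. This leaves two cases: either $\lambda \equiv 0$ on an open set, which makes $\Sigma$ totally geodesic there, or $\lambda \neq 0$, in which case $\Sigma$ has exactly two distinct principal curvatures with multiplicities $n-2$ and $1$.

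\emph{Step 1 (away from umbilics).} On the open set $U=\{\lambda\neq 0\}$, the eigendistribution of $\lambda$ has rank $n-2\geq 2$. By the Codazzi equations it is integrable, and $\lambda$ is constant along its leaves. The leaves are umbilical in $\Rbb^n$, hence pieces of round $(n-2)$-spheres or planes. This is the standard route to the result of do Carmo--Dajczer that a minimal hypersurface with two principal curvatures of multiplicities $n-2,1$ is an open piece of a rotation hypersurface. I will invoke that classification directly, so on each component of $U$ the hypersurface $\Sigma$ is a piece of a minimal rotational hypersurface, i.e.\ a generalized catenoid, or a hyperplane, which is excluded on $U$.

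\emph{Step 2 (globalization).} The catenoid has $\lambda\neq 0$ everywhere, and $\Sigma$ is connected and real analytic as a minimal hypersurface. Hence if $U\neq\emptyset$, analytic continuation shows that all of $\Sigma$ lies in a single catenoid with some axis $L$. If $U=\emptyset$, then $\Sigma$ is totally geodesic and lies in a hyperplane $P$.

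\emph{Step 3 (free boundary condition).} In the planar case, orthogonality of $\partial\Sigma$ with $\partial B^n$ means the position vector $x$ is tangent to $P$ along $\partial \Sigma$, so $P$ passes through the origin and $\Sigma$ is an equatorial ball. In the catenoid case, the position vector $x$ must be normal to $\partial B^n$ and tangent to $\Sigma$ at each boundary point. The conormal condition applied along a boundary orbit of rotation about $L$ forces the origin to lie on $L$. Otherwise, compare $x$ with $x$ translated along the orbit: the rotational symmetry is only about $L$, and the function $\langle x,\nu\rangle$ must vanish on $\partial\Sigma$; computing it on a profile circle shows it is not constant on the orbit unless $0\in L$. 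Once $0\in L$, the profile curve must meet the unit circle orthogonally in the half-plane, which gives exactly the critical catenoid (up to rotation). One also checks that $\Sigma\cap B^n$ is the whole compact piece between the two symmetric boundary spheres, using properness and connectedness.

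\emph{Main obstacle.} I expect the delicate step to be the local classification in Step 1 together with its passage to all of $\Sigma$. The set where $\lambda=0$ could a priori be large with empty interior, and the rotational structure must extend across it. I will handle this with analyticity: the zero set of $\lambda^2=|A|^2/((n-2)(n-1))$ is analytic, so either it is everything or its complement is dense and connected enough to apply unique continuation of the catenoid.
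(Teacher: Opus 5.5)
\textbf{There is a genuine gap: the case $n=4$.} Your first sentence already fails when $n=4$. The Cartan--Schouten characterization (a principal curvature of multiplicity at least $m-1$) holds for hypersurfaces $M^m\subset\Rbb^{m+1}$ only when $m\geq 4$, i.e.\ $n\geq 5$.

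For $m=3$ the Weyl tensor vanishes identically. Conformal flatness then only means that the Schouten tensor $S=-A_\Sigma^2+\frac{|A_\Sigma|^2}{4}g$ is Codazzi. This admits hypersurfaces of $\Rbb^4$ with three distinct principal curvatures (Cartan's examples, tied to Guichard nets), and, as the paper's introduction recalls, even minimal ones. So in $B^4$ you cannot write $\mu=-2\lambda$ with $\lambda$ double, and Steps 1--2 have nothing to act on.

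Because such examples exist locally, no argument using only interior geometry plus analyticity can work. The free boundary condition must be used to rule out three distinct principal curvatures. For $n\geq 5$ your route is essentially the one the paper points to (Blair, Pinl--Ziller, do Carmo--Dajczer, plus uniqueness of the critical catenoid). What is missing is exactly the case that carries the content of the theorem.

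\textbf{The missing idea} is to test the Codazzi property of $S$ at boundary points. There $\nu=x$ is principal with eigenvalue $\mu$. Codazzi for $A_\Sigma$ gives $\nu(\lambda_i)=\mu-\lambda_i$ and $\nu(|A_\Sigma|^2)=-2(|A_\Sigma|^2+3\mu^2)$. Equating $(\nabla_{e_i}S)(\nu,e_i)=\lambda_i^2-\mu^2$ with $(\nabla_\nu S)(e_i,e_i)$ yields $(\lambda_1-\mu)^2=(\lambda_2-\mu)^2$, hence $\mu(\lambda_1-\lambda_2)=0$.

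Since $\lambda_1,\lambda_2$ are the principal curvatures of $\partial\Sigma$ in $S^3$ and $\mu=-H_{\partial\Sigma}$, each boundary component is minimal or totally umbilic in $S^3$. In the first case $x^\perp$ is a Jacobi field with zero Cauchy data on that component, so $\Sigma$ is flat. In the second case the component is a round $2$-sphere, and the Killing Jacobi fields of the rotations preserving it also have zero Cauchy data. So $\Sigma$ is rotational about an axis through the origin, and uniqueness of the critical catenoid concludes.

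\textbf{A smaller issue in Step 3.} Your proof that $0\in L$ is circular as written: it presupposes that boundary components are orbits of the rotation about $L$. It can be repaired as follows. Along $\partial\Sigma$, $x$ is a principal direction of the catenoid. It cannot lie in the orbit directions, since otherwise the compact boundary component would contain an unbounded profile curve. Hence each component is an orbit, and $x$ being tangent to the profile curve at every point of that orbit forces $0\in L$.
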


The theory of conformally flat minimal hypersurfaces is a traditional subject in differential geometry. In $\Rbb^n$, when $n \geq 5$ it is well-known since Cartan \cite{cartan2917conforme} that one of the principal curvatures of such hypersurfaces has multiplicity at least $n - 2$, and a complete classification was independently obtained by Blair \cite{blair1975catenoid} and Pinl-Ziller \cite{pinl1976hypersurfaces} (see also do Carmo-Dajczer \cite{docarmo1983rotation}). So, our Theorem \ref{thm:main} in dimensions $n \geq 5$ is a trivial consequence of this classification result, once the critical catenoid is unique among non-planar rotational hypersurfaces with free boundary in the ball (see \cite{freidin2017cohomogeneity, smith2021index}).\smallskip

The general scenario becomes much more complicated if $n = 4$, in which case there are conformally flat minimal hypersurfaces with three different principal curvatures (see, for instance, \cite{dajczer2019minimalconformallyflat}). Despite this, using the system of partial differential equations for holonomic hypersurfaces, a full local classification was obtained by Rei-Tojeiro \cite{dajczer2019minimalconformallyflat}. However, our Theorem \ref{thm:main} does not follow directly from their classification, once, in principle, we cannot avoid the examples with three distinct principal curvatures.\smallskip

Hence Theorem \ref{thm:main} implies that the free boundary condition is strong enough to both generalize the rotational behaviour of conformally flat minimal hypersurfaces of $\Rbb^n$ in high dimensions to $n = 4$, and to rule out the examples with three distinct principal curvatures.\smallskip

One direct consequence of Theorem \ref{thm:main} reads as follows.

\begin{corollary}\label{cor:main}
    Let $\Sigma$ be a locally conformally flat free boundary minimal hypersurface in $B^n$ with $n \geq 4$. Suppose $\Sigma$ is homeomorphic to $B^{n - 1}$. Then $\Sigma$ is congruent to the equatorial ball.
\end{corollary}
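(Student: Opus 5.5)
By Theorem \ref{thm:main}, $\Sigma$ is congruent either to the equatorial ball or to a critical catenoid. It therefore suffices to rule out the critical catenoid by a topological argument: we show that a critical catenoid in $B^n$ is not homeomorphic to $B^{n-1}$.

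We first recall the structure of the critical catenoid. It is a piece of a minimal non-planar rotational hypersurface. After a rotation, its axis is a line $\ell$ through the origin; the hypersurface is not a plane, and its boundary meets $\partial B^n$ orthogonally. Its profile curve is a connected arc of the generating curve of the $n$-dimensional catenoid. This curve never meets the axis: it stays at distance at least the neck radius $r_0 > 0$ from $\ell$. So $\Sigma$ is the orbit of a compact arc $\gamma \cong [0,1]$ under $SO(n-1)$ acting on the hyperplanes orthogonal to $\ell$, and the orbit through each point of $\gamma$ is a round sphere $S^{n-2}$ of positive radius. Hence $\Sigma$ is homeomorphic to $S^{n-2} \times [0,1]$.

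Next we compare topologies. The interior of $\Sigma$ deformation retracts onto a sphere $S^{n-2}$, so $H_{n-2}(\Sigma;\mathbb{Z}) \cong \mathbb{Z}$, since $n - 2 \geq 2$. By contrast, $B^{n-1}$ is contractible, so $H_{n-2}(B^{n-1};\mathbb{Z}) = 0$. A second, equally easy check: the boundary of $S^{n-2}\times[0,1]$ has two components, while $\partial B^{n-1} = S^{n-2}$ is connected for $n \geq 3$. Either way, the critical catenoid is not homeomorphic to $B^{n-1}$. This leaves the equatorial ball, which proves the corollary.

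The only point that needs care is confirming that the critical catenoid really is an annulus. Two things could go wrong: the profile arc inside $B^n$ might meet the axis, or its intersection with the ball might be disconnected. The first is excluded because the generating curve of a non-planar minimal rotational hypersurface stays at distance at least $r_0 > 0$ from the axis. The second is excluded because $\Sigma$ is connected by definition of a free boundary minimal hypersurface, and a connected rotationally invariant set of this form has a connected profile.
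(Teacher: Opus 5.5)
Your proposal is correct and follows the paper's route. The paper treats the corollary as an immediate consequence of Theorem \ref{thm:main}: the critical catenoid is the $SO(n-1)$-orbit of a compact profile arc that stays off the axis, so it is an annulus $S^{n-2}\times[0,1]$ and cannot be homeomorphic to $B^{n-1}$. One minor point: $H_{n-2}(S^{n-2};\mathbb{Z})\cong\mathbb{Z}$ holds for every $n-2\geq 1$, so the qualifier ``since $n-2\geq 2$'' is unnecessary, and the count of boundary components already settles the matter on its own.
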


The well-known Nitsche theorem \cite{nitsche1985stationary} asserts that a free boundary minimal disk in $B^3$ must be flat. It was later generalized in the works of Fraser-Schoen \cite{fraser2025uniqueness} for free boundary minimal disks in the ball of arbitrary dimension, and Wheeler-Wheeler \cite{wheeler2019minimalhypersurfaces} for free boundary minimal balls in $B^n$ that are graphical with respect to a given Killing vector field. Our Corollary \ref{cor:main} may also be viewed as a Nitsche-type theorem under the conformal assumption.\smallskip

Finally, we point out that Theorem \ref{thm:main} cannot be extended to dimension $n = 3$ since every surface is locally conformally flat, and there are plenty of examples of free boundary minimal annuli in $B^3$ that cannot be rotational as shown in \cite{fernandez2023immersed, kapouleas2022free}.

\subsubsection*{LLM disclosure}

This work does not contain material written by AI, although we have used ChatGPT to review definitions and results already known in the literature, as well as to perform some computations. All mathematical arguments were independently checked by the author, who assumes full responsibility for the final manuscript.

\subsubsection*{Acknowledgments}

The author is thankful to Ezequiel Barbosa for suggesting this problem, and to Detang Zhou for pointing out that using the Schouten tensor was the most natural way to treat it. This work was supported by FAPESP grant 2023/14796-3.

\section{Proof of Theorem \ref{thm:main}}

We start with the following result showing that rotational symmetry imposed along one boundary component of a free boundary minimal submanifold in $B^n$ propagates to the whole submanifold.

\begin{lemma}\label{lem:rotational}
    Let $\Sigma$ be a free boundary minimal submanifold in $B^n$. Suppose that at least one component of $\partial \Sigma$ is $G$-invariant under a connected subgroup $G \subset SO(n)$. Then $\Sigma$ is $G$-invariant.
\end{lemma}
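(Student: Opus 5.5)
Since $G$ is connected, it is generated by one-parameter subgroups $e^{tA}$ with $A \in \mathfrak{so}(n)$. It therefore suffices to show that every Killing field $X(x) = Ax$ with $A \in \mathfrak{g}$ is tangent to $\Sigma$. The plan is a unique continuation argument for the Jacobi field $u = \langle X, \nu\rangle$ in codimension one. In higher codimension, $u$ is replaced by the normal projection $X^\perp$, which satisfies the Jacobi system $L X^\perp = 0$, where $L$ is the Jacobi operator. Rotations of $\Rbb^n$ preserve both the ball and the minimality, so $X^\perp$ is a Jacobi field on $\Sigma$. It also satisfies the free boundary Robin condition $\nabla^\perp_\eta X^\perp = X^\perp$ along $\partial\Sigma$, where $\eta = x$ is the outward conormal. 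This holds because the flow of $X$ preserves $\partial B^n$ and hence maps free boundary minimal submanifolds to free boundary minimal submanifolds. Alternatively, one checks it directly using $\langle X, x\rangle = 0$ and the fact that the second fundamental form of the sphere is the identity.

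Let $\Gamma$ be the $G$-invariant boundary component. Along $\Gamma$, $X$ is tangent to $\Gamma \subset \Sigma$, so $X^\perp = 0$ on $\Gamma$. The Robin condition then gives $\nabla^\perp_\eta X^\perp = X^\perp = 0$ on $\Gamma$. Hence $X^\perp$ has vanishing Cauchy data on the open piece $\Gamma$ of the boundary for a second-order elliptic system with diagonal principal part (the Laplacian on the normal bundle). By unique continuation for the Cauchy problem, $X^\perp \equiv 0$ in a neighborhood of $\Gamma$ in $\Sigma$. Since $\Sigma$ is connected, interior weak unique continuation (Aronszajn) gives $X^\perp \equiv 0$ on all of $\Sigma$. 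One concrete way to run the Cauchy step is to extend $\Sigma$ slightly across $\Gamma$: by the free boundary condition and analyticity, $\Sigma$ is real analytic up to the boundary. Since the Cauchy data vanish, $X^\perp$ extended by zero is a solution across $\Gamma$, and interior unique continuation then applies.

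Once $X$ is tangent to $\Sigma$ everywhere, the orbits $t\mapsto e^{tA}p$ of points $p \in \Sigma$ stay in $\Sigma$. For embedded $\Sigma$ this follows from uniqueness of integral curves of a tangent vector field. In the immersed case one argues on the abstract manifold: the pulled-back tangent field integrates to a flow $\phi_t$ on $\Sigma$ with $F\circ\phi_t = e^{tA}\circ F$. Since $\partial\Sigma$ is compact and $X$ is tangent to $\partial B^n$, the flow preserves $\partial \Sigma$ and is complete. Hence $\Sigma$ is invariant under every one-parameter subgroup of $G$, and so under $G$.

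The main obstacle is the justification of the boundary unique continuation step, namely obtaining both Cauchy data from the single hypothesis on $\Gamma$. The key point is that the Robin free boundary condition converts $X^\perp|_\Gamma = 0$ into vanishing normal derivative. I would verify this carefully by differentiating $\langle X, \nu\rangle$ in the conormal direction and using $\nabla_\eta X = A\eta$, $\eta = x$, together with $\langle A x, \nu\rangle = 0$ on $\Gamma$. The computation should give $\eta(u) = \langle Ax, \nabla_\eta \nu\rangle + \langle Ax,\nu\rangle = -h(\eta, X^T) + u$. Because $X^T$ is tangent to $\Gamma$ and $\eta$ is a principal direction along a free boundary (a consequence of $\partial_r$ meeting the sphere orthogonally), $h(\eta, X^T) = 0$, so $\eta(u) = u = 0$ on $\Gamma$.
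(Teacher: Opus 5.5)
Your proof is correct and follows the paper's argument. For each $A\in\mathfrak{g}$, the normal part $X^\perp$ of $X(x)=Ax$ is a Jacobi field satisfying $\nabla^\perp_\eta X^\perp = X^\perp$; the paper derives this directly as $\nabla^\perp_\nu X^\perp = X^\perp - \alpha_\Sigma(X,\nu)$ with $\alpha_\Sigma(X,\nu)=0$ on $\Gamma$. Hence both Cauchy data vanish on $\Gamma$, and unique continuation gives $X^\perp\equiv 0$.

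Your extra steps fill in details the paper leaves implicit: reducing the boundary Cauchy problem to interior unique continuation, and integrating the tangent field on the abstract manifold. Two small refinements. Smooth extension of the coefficients across $\Gamma$ already suffices there, so analyticity up to the boundary is not needed. And completeness of the flow comes from compactness of the domain, which follows from properness in $\bar B^n$, not from compactness of $\partial\Sigma$.
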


\begin{proof}
    Let $x: M \to B^n$ be an immersion with $x(M) = \Sigma$, and let $\Gamma$ be a component of $\partial \Sigma$ invariant under $G \subset SO(n)$. For each Killing vector field $X$ generating a one-parameter subgroup of $G$, we have that $X$ is tangent to $\Gamma$ along it and that $X^\perp$ is Jacobi by minimality of $\Sigma$. Now, let $\nu = x$ denote the unit conormal of $\partial \Sigma$. Since $X(x) = Q(x)$ for some $Q \in \mathfrak{so}(n)$,
    \[\bar{\nabla}_\nu X = Q(\nu) = Q(x) = X(x),\]
    where $\bar{\nabla}$ is the connection of $\Rbb^n$, and so free boundary condition gives that along $\Gamma$
    \[\nabla^\perp_\nu X^\perp = \bigl(\bar{\nabla}_\nu X\bigr)^\perp - \bigl(\bar{\nabla}_\nu X^\top\bigr)^\perp = X^\perp - \alpha_\Sigma(X, \nu) = 0,\]
    where $\nabla^\perp$ is the normal connection of $\Sigma$ and $\alpha_\Sigma$ is the vectorial second fundamental form of $\Sigma$ in $\Rbb^n$. Therefore $X^\perp$ satisfies
    \[\left\{
        \begin{aligned}
            L(X^\perp) &= 0\, \mbox{ in }\, \Sigma\\
            X^\perp &= 0\, \mbox{ in }\, \Gamma\\
            \nabla_\nu^\perp X^\perp &= 0\, \mbox{ in }\, \Gamma,
        \end{aligned}
    \right.\]
    where $L$ is the Jacobi operator on the normal bundle of $\Sigma$. It follows by unique continuation that $X^\perp$ vanishes identically, which means that $X$ is tangent to $\Sigma$, so that $G(\Sigma) = \Sigma$.
\end{proof}

We now prove our main result in the low dimensional setting.

\begin{theorem}
    Let $\Sigma$ be a locally conformally flat free boundary minimal hypersurface in $B^4$. Then $\Sigma$ is congruent to either the equatorial ball or the critical catenoid.
\end{theorem}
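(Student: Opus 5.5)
We use the free boundary condition to show that the boundary is umbilic in a suitable sense, hence rotational, and then invoke Lemma \ref{lem:rotational}. Let $\Sigma^3\subset B^4$ have unit normal $N$ and principal curvatures $\lambda_1,\lambda_2,\lambda_3$ with $\sum\lambda_i=0$.

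\textbf{Step 1: Gauss and Cotton conditions.} By the Gauss equation, the Ricci tensor of $\Sigma$ is $\mathrm{Ric}=-A^2$, where $A$ is the shape operator, and the scalar curvature is $R=-|A|^2$. The Schouten tensor is therefore
\[P=-A^2+\tfrac{1}{4}|A|^2\,\mathrm{Id}.\]
In dimension three, local conformal flatness is equivalent to the Codazzi property of $P$, that is, the vanishing of the Cotton tensor: $(\nabla_XP)Y=(\nabla_YP)X$. Note that $P$ commutes with $A$ and has eigenvalues $-\lambda_i^2+\frac14|A|^2$. Writing this condition in a principal frame $\{e_i\}$ and combining it with the Codazzi equation for $A$ gives algebraic–differential relations between the connection coefficients $\langle\nabla_{e_i}e_j,e_k\rangle$ and the derivatives $e_k(\lambda_i)$.

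\textbf{Step 2: Boundary analysis.} Along $\partial\Sigma$, the free boundary condition says the conormal is $\nu=x$. Since $N\perp x$ and $\partial B^4$ is totally umbilic, differentiating $\langle N,x\rangle=0$ along $\partial\Sigma$ gives $\langle A\nu,T\rangle=0$ for every $T$ tangent to $\partial\Sigma$. Hence $\nu$ is a principal direction along $\partial\Sigma$; call its principal curvature $\lambda_1$.

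The boundary $\Gamma=\partial\Sigma$ is a surface in $S^3$ whose second fundamental form, with respect to the normal $N$, is $A|_{T\Gamma}$. Its normal curvature in the other direction, $\nu$, is simply the identity coming from the sphere, and its principal curvatures in $S^3$ are $\lambda_2,\lambda_3$.

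The goal is to show that $\lambda_2=\lambda_3$ on $\Gamma$, or more precisely that $\Gamma$ is a rotational (Clifford-type or umbilic) surface in $S^3$. Two tools are available. The first is the Cotton condition evaluated with $X=\nu$. The second is the derivative of $A$ in the $\nu$ direction, which the free boundary condition controls: differentiating $\langle N,x\rangle=0$ twice gives $(\nabla_\nu A)(\nu,T)$ in terms of $A$. Together these should yield that $\Gamma$ is a surface in $S^3$ with a principal curvature relation rigid enough to force invariance under an $SO(2)$ (or $SO(3)$) subgroup fixing an axis.

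\textbf{Step 3: Conclusion.} If $\Gamma$ is invariant under a connected $G\cong SO(3)$ acting on a hyperplane, or under the $SO(2)$ of a catenoid-type symmetry, then Lemma \ref{lem:rotational} makes $\Sigma$ $G$-invariant. A $G$-invariant minimal hypersurface is either planar or rotational, so $\Sigma$ is either the equatorial ball or, by the uniqueness of \cite{freidin2017cohomogeneity, smith2021index}, the critical catenoid.

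Alternatively, once multiplicity of a principal curvature is established, we can use the Rei–Tojeiro classification \cite{dajczer2019minimalconformallyflat}: minimal conformally flat hypersurfaces with a double principal curvature are rotational.

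\textbf{Main obstacle.} The hardest part is Step 2: excluding three distinct principal curvatures along $\Gamma$. The first thing to try is to show that $\Gamma\subset S^3$ is totally umbilic, or is a Dupin surface with constant principal curvatures. For this, use that the Cotton condition $(\nabla_\nu P)T=(\nabla_TP)\nu$, combined with the boundary identities from the free boundary condition, forces $e_T(\lambda_2)$ and $e_T(\lambda_3)$ to satisfy the Dupin-type equations. Then use the classification of Dupin surfaces in $S^3$ (cyclides) together with compactness of $\Gamma$ to obtain rotational symmetry.
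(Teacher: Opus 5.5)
There is a genuine gap. Step 2, which you yourself call the heart of the matter, is never carried out, and the conclusion you anticipate from it is not the one it delivers.

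\textbf{The missing computation.} It is short. The free boundary condition plus Codazzi controls the \emph{tangential} components $(\nabla_\nu A)(T,T')=\lambda_1\langle T,T'\rangle-A(T,T')$. The component you single out, $(\nabla_\nu A)(\nu,T)$, equals $T(\lambda_1)$ and is not determined pointwise by $A$. At $p\in\Gamma$, take a principal frame $e_2,e_3$ of $T_p\Gamma$ extended by $\Gamma$-geodesics, so that $\nabla_{e_i}e_i=-\nu$ and $\nabla_{e_i}\nu=e_i$ at $p$.
\begin{itemize}
\item Since $P(\nu,T)=0$ along $\Gamma$, one gets $(\nabla_{e_i}P)(\nu,e_i)=\lambda_i^2-\lambda_1^2$.
\item From $\nu(\lambda_i)=\lambda_1-\lambda_i$ and tracelessness, $(\nabla_\nu A)(\nu,\nu)=-3\lambda_1$ and $\nu|A|^2=-2(|A|^2+3\lambda_1^2)$.
\item Hence $(\nabla_\nu P)(e_i,e_i)=2\lambda_i(\lambda_i-\lambda_1)-\frac12(|A|^2+3\lambda_1^2)$.
\end{itemize}
The Codazzi property of $P$ then forces $(\lambda_2-\lambda_1)^2=(\lambda_3-\lambda_1)^2$. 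Using $\lambda_2+\lambda_3=-\lambda_1$, this becomes
\[\lambda_1(\lambda_2-\lambda_3)=0 \quad\text{on } \partial\Sigma .\]

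\textbf{What this gives.} This is neither umbilicity nor a Dupin condition. It is a pointwise dichotomy: $\Gamma$ is umbilic or minimal in $S^3$, since its mean curvature there is $-\lambda_1$.
\begin{itemize}
\item The minimal branch cannot be handled by symmetry at all, since Clifford tori and Lawson surfaces are minimal in $S^3$. Your plan has nothing for it. What works is that $x^\perp=\langle x,N\rangle N$ is a Jacobi field vanishing on $\partial\Sigma$ with $\nabla^\perp_\nu x^\perp=-\lambda_1 N$ on $\Gamma$. So $\lambda_1\equiv 0$ on $\Gamma$ gives $x^\perp\equiv 0$ by unique continuation, and $\Sigma$ is a cone, hence an equatorial ball.
\item You also need a connectedness argument. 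On the open set $\{\lambda_1\neq 0\}\subset\Gamma$ the surface is umbilic. By Codazzi for $\Gamma\subset S^3$, $\lambda_2=\lambda_3=-\lambda_1/2$ is locally constant and nonzero there, so the set is also closed. Thus either $\lambda_1\equiv0$ on $\Gamma$ or $\Gamma$ is a round $2$-sphere.
\item Only in the round-sphere case does Lemma~\ref{lem:rotational} apply, with $G\cong SO(3)$.
\end{itemize}

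\textbf{Your fallback routes do not replace this.}
\begin{itemize}
\item Compact Dupin surfaces in $S^3$ are cyclides, i.e.\ M\"obius rather than isometric images of product tori. They generally have no continuous isometry group.
\item An $SO(2)$ or $SO(2)\times SO(2)$ symmetry of $\Gamma$ would not suffice anyway. ``$G$-invariant minimal implies planar or rotational'' holds for the $SO(3)$ fixing an axis but fails for those smaller groups.
\item The Rei--Tojeiro route needs a double principal curvature on an open subset of $\Sigma$. Boundary information only gives it along $\Gamma$, and interior points with three distinct principal curvatures are exactly the difficulty in $B^4$.
\end{itemize}
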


\begin{proof}
    Let $x: M \to B^4$ be an immersion such that $x(M) = \Sigma$, and let $\nu = x$ be the outward unit conormal of $\partial \Sigma$ in $\Sigma$. Choose a local unit normal vector field $N$ along it, and define the scalar second fundamental form of $\Sigma$ in $\Rbb^4$ as
    \[A_\Sigma(X, Y) = \langle \bar{\nabla}_XY, N\rangle,\]
    where $\bar{\nabla}$ is the connection of $\Rbb^4$. By free boundary condition, we know that $\nu$ is a principal direction of $\Sigma$ along $\partial \Sigma$. Fix $p \in \partial \Sigma$, choose an orthonormal basis $\{e_1, e_2\}$ of $T_p(\partial \Sigma)$ that diagonalizes $A_\Sigma|_{T_p(\partial \Sigma) \times T_p(\partial \Sigma)}$ and set
    \[\lambda_1 = A_\Sigma(e_1, e_1),\quad \lambda_2 = A_\Sigma(e_2, e_2),\quad \mu = A_\Sigma(\nu, \nu).\]
    Let $S$ denote the Schouten tensor of $\Sigma$. By minimality of $\Sigma$,
    \[S = -\, A_\Sigma^2 + \frac{|A_\Sigma|^2}{4}g,\]
    where $g$ is the induced metric of $\Sigma$, and then $S(\nu, e_1) = S(\nu, e_2) = 0$ by free boundary condition. Consider an extension of $e_1$ and $e_2$ in $\partial \Sigma$ such that they are geodesic at $p$ with respect to the induced metric on $\partial \Sigma$. On one hand, since $e_i$ is unit, we have $\nabla_{e_i}e_i = -\, \nu$, where $\nabla$ is the connection induced on $\Sigma$, and thus
    \begin{equation}\label{eq:schouten1}
        \begin{aligned}
            (\nabla_{e_i}S)(\nu, e_i) &= e_i\big(S(\nu, e_i)\big) - S(\nabla_{e_i}\nu, e_i) - S(\nu, \nabla_{e_i}e_i)\\
            &= -\, S(e_i, e_i) + S(\nu, \nu) = \lambda_i^2 - \mu^2.
        \end{aligned}
    \end{equation}
    On the other hand, extending $\{e_1, e_2, \nu\}$ with $\nabla_\nu e_1 = \nabla_\nu e_2 = \nabla_\nu \nu = 0$ at $p$, one has
    \begin{equation}\label{eq:schouten2}
        (\nabla_\nu S)(e_i, e_i) = \nu\big(S(e_i, e_i)\big) - 2S(\nabla_\nu e_i, e_i) = \nu\Bigl(-\, \lambda_i^2 + \frac{|A_\Sigma|^2}{4}\Bigr).
    \end{equation}
    Moreover, again at $p$, after extending $\lambda_1$, $\lambda_2$ and $\mu$ using the above extension of $\{e_1, e_2, \nu\}$,
    \begin{equation}\label{eq:sff1}
        \frac{1}{2}\nu(|A_\Sigma|^2) = \lambda_1\, \nu(\lambda_1) + \lambda_2\, \nu(\lambda_2) + \mu\, \nu(\mu).
    \end{equation}
    Since $A_\Sigma$ is Codazzi,
    \begin{equation}\label{eq:sff2}
        \begin{aligned}
            \nu(\lambda_i) &= (\nabla_\nu A_\Sigma)(e_i, e_i) = (\nabla_{e_i} A_\Sigma)(e_i, \nu) \\
            &= e_i\big(A_\Sigma(e_i, \nu)\big) - A_\Sigma(\nabla_{e_i} e_i, \nu) - A_\Sigma(e_i, \nabla_{e_i} \nu) \\
            &= \mu - \lambda_i
        \end{aligned}
    \end{equation}
    and then by minimality of $\Sigma$,
    \[(\nabla_\nu A_\Sigma)(\nu, \nu) = -\, (\nabla_\nu A_\Sigma)(e_1, e_1) - (\nabla_\nu A_\Sigma)(e_2, e_2) = -\, 3\mu.\]
    As a consequence, equation \eqref{eq:sff1} becomes
    \begin{equation}\label{eq:sff3}
        \nu(|A_\Sigma|^2) = -\, 2(|A_\Sigma|^2 + 3\mu^2).
    \end{equation}
    Plugging equations \eqref{eq:sff2} and \eqref{eq:sff3} into \eqref{eq:schouten2}, we get
    \begin{equation}\label{eq:schouten3}
        (\nabla_\nu S)(e_i, e_i) = 2\lambda_i(\lambda_i - \mu) - \frac{1}{2}(|A_\Sigma|^2 + 3\mu^2).
    \end{equation}
    Since $S$ is Codazzi, equations \eqref{eq:schouten1} and \eqref{eq:schouten3} must coincide. In particular,
    \[(\lambda_i - \mu)^2 = \frac{1}{2}(|A_\Sigma|^2 + 3\mu^2),\]
    which shows that $(\lambda_1 - \mu)^2 = (\lambda_2 - \mu)^2$. Thus, by minimality of $\Sigma$,
    \begin{equation}\label{eq:zero1}
        \mu(\lambda_1 - \lambda_2) = 0.
    \end{equation}
    Note that, by free boundary condition, the second fundamental form of $\partial \Sigma$ as a surface in $S^3$ is just $A_{\partial \Sigma} = A_\Sigma|_{T(\partial \Sigma) \times T(\partial \Sigma)}$, which means that $\lambda_1$ and $\lambda_2$ are principal curvatures of $\partial \Sigma$ in $S^3$ as well, so that the mean curvature of $\partial \Sigma$ in $S^3$ is $H_{\partial \Sigma} = -\, \mu$. Hence \eqref{eq:zero1} implies that $\partial \Sigma$ is the union of pieces of minimal and umbilical surfaces of $S^3$.\smallskip

    Let $\Gamma$ be a connected component of $\partial \Sigma$. Assume that $H_{\partial \Sigma}$ vanishes identically along it. Since locally $x^\perp = \langle x, N \rangle N$, minimality of $\Sigma$ implies that $x^\perp$ is Jacobi. Moreover, on $\Gamma$ free boundary condition shows
    \[\nabla^\perp_\nu x^\perp = \nu \langle x, N\rangle N + \langle x, N\rangle \nabla^\perp_\nu N = \bigl(\langle \bar{\nabla}_\nu\, x, N\rangle + \langle x, \bar{\nabla}_\nu N\rangle\bigr)N.\]
    However,
    \[\langle \bar{\nabla}_\nu\, x, N\rangle = \langle \nu, N \rangle = 0\]
    and on $\Gamma$
    \[\langle x, \bar{\nabla}_\nu N\rangle = -\, A_\Sigma(\nu, \nu) = -\, \mu = H_{\partial \Sigma} = 0.\]
    Therefore,
    \[\left\{
        \begin{aligned}
            L(x^\perp) &= 0\, \mbox{ in }\, \Sigma\\
            x^\perp &= 0\, \mbox{ in }\,\partial\Sigma\\
            \nabla^\perp_\nu x^\perp &= 0\, \mbox{ in }\, \Gamma,
        \end{aligned}
    \right.\]
    where $L$ is the Jacobi operator on the normal bundle of $\Sigma$. By unique continuation $x^\perp$ vanishes identically, and then $\Sigma$ is a flat ball.\smallskip

    If $H_{\partial \Sigma}(p) \neq 0$ for some $p \in \Gamma$, there exists a connected open subset $U \subset \Gamma$ containing $p$ such that $\lambda_1 = \lambda_2 = \lambda$ on it, so that it is totally umbilical in $S^3$. By the Codazzi equations, $\lambda$ is constant on $U$, and by continuity it is also constant in $\bar{U}$. It follows that if $H_{\partial \Sigma}$ is zero at some point of $\partial U$, it would vanish on the whole of $U$, contradicting its choice. Consequently, by taking $U$ maximal, one gets $\partial U = \emptyset$, and thus $U = \Gamma$ is totally umbilical in $S^3$. This means that $\Gamma$ is a round two-sphere, and therefore $\Sigma$ is rotational by Lemma \ref{lem:rotational}. Since $\Sigma$ cannot be an equatorial ball in that case, the uniqueness of the critical catenoid among non-planar rotational free boundary minimal hypersurfaces of the ball concludes our assertion.

\end{proof}

\begin{remark}
    Our computations above can be applied also to the case $n \geq 5$ in the following way. Let $\lambda_1, \ldots, \lambda_{n - 2}, \mu$ be the principal curvatures of $\Sigma$ along $\partial \Sigma$, where $\mu = A_\Sigma(\nu, \nu)$. As before, one shows that equation \eqref{eq:zero1} takes the general form
    \begin{equation}\label{eq:zero2}
        (\lambda_i - \lambda_j)(\lambda_i + \lambda_j - 2\mu) = 0
    \end{equation}
    for all $i \neq j$. Since one of the principal curvatures of $\Sigma$ has multiplicity at least $n - 2$, for a given boundary component $\Gamma \subset \partial \Sigma$ we have that either $\lambda_1 = \ldots = \lambda_{n - 2}$ or $\lambda_2 = \ldots = \lambda_{n - 2} = \mu$ at each $p \in \Gamma$. In this second case \eqref{eq:zero2} implies $\lambda_1 = \mu$. Since the second fundamental form of $\partial \Sigma$ inside of $S^{n - 1}$ coincides with $A_\Sigma|_{T(\partial \Sigma) \times T(\partial \Sigma)}$ by free boundary condition, one concludes that in either case $\Gamma$ is totally umbilical in $S^{n - 1}$, which means that $\Gamma$ is a round $(n - 2)$-sphere. The result now follows from Lemma \ref{lem:rotational} as before.
\end{remark}

\bibliographystyle{siam}
\bibliography{references}
	
\end{document}